\theoremstyle{plain}
\newtheorem{theorem}{Theorem}[section]
\newtheorem{corollary}[theorem]{Corollary}
\newtheorem{lemma}[theorem]{Lemma}
\theoremstyle{definition}
\newtheorem{definition}[theorem]{Definition}
\newtheorem{remark}[theorem]{Remark}
\renewcommand{\P}{\mathbb{P}}
\newcommand{\Z}{\mathbb{Z}}
\newcommand{\OO}{\mathcal{O}}
\newcommand{\Div}{\mbox{Div}}
\renewcommand{\div}{\mbox{div}}
\newcommand{\set}[1]{\left\lbrace #1 \right\rbrace}
\def\ps@pprintTitle{%
  \let\@oddhead\@empty{}
  \let\@evenhead\@empty{}
  \let\@oddfoot\@empty{}
  \let\@evenfoot\@oddfoot{}
}
\title{Secants to the Kummer Variety and the minimal Cohomological Class}
\author{José Alejandro Aburto}
\thanks{Supported by ANID, Doctorado Nacional 2018, folio 21180924}
\email{jose.aburto@ug.uchile.cl}
\keywords{Jacobians, Pryms, Abelian Varieties, Secants, Stratification}
\subjclass[2020]{14K25,14H40}%
\begin{document}

\maketitle

\begin{abstract}
We prove that, under certain conditions, the existence of a curve of $(m+2)$-secants to the Kummer variety of an indecomposable principally polarized abelian variety $X$,
represents $m$-times the minimal cohomological class in $X$. 
In the case of $m=2$, we find an involution of such curve which proves that \(X\) is a Prym variety by results of Krichever-Grushevsky.
This continues the work of Beauville and Debarre, who asked about the relation between some geometric properties of abelian varieties in a way to obtain a stratification of its moduli space.
\end{abstract}

\section{Introduction}
The origin of the problem of secants to the Kummer variety comes from the Schottky problem; 
that is, to determine which complex principally polarized abelian varieties arise as Jacobian varieties of complex curves.
It is a known fact (see \cite{Gunning1982feb}) that a Jacobian admits a \(4\)-dimensional family of trisecant lines to its Kummer variety.
Welters conjectured in \cite{Welters1984} that the existence of one trisecant line is enough to give such a characterization.
The conjecture resulted to be true. It was proved by Krichever in \cite{Krichever2010} and \cite{KricheverFlexSolved}  using analytic methods,
and an algebro-geometric proof of this theorem was published by Arbarello-Codogni-Pareschi \cite{Arbarello2021} for the degenerate case and the flex case.
This gives a very geometric solution to the Schottky problem.


Debarre proved in \cite{VersDebarre1992} that if the abelian variety $X$ contains a curve that parametrices $(m+2)$-secants, this implies, under certain geometric conditions, that the curve is $m$-times the minimal cohomological class in $X$. But, this is done under certain geometrical conditions.

The aim of this article is to remove several conditions required in previous results, such as $\mbox{End}(X)\simeq\mathbb{Z}$.
Then, we review the quadrisecant case ($m=2$), in which a certain involution of the curve in study appears, the existence of this involution implies that \(X\) is a Prym variety by a result of Krichever-Grushevsky in \cite{GrushevskyKrichever2010PairOf4secants}.
We finish by proving that the existence of a curve of $(m+2)$-secant ($m$-planes) to the Kummer variety can be achieved by the existence of a finite number of $(m+2)$-secants to the Kummer variety of $X$,
 more specifically, by one degenerate \((m+2)\)-secant plus \((m-1)\) honest (non-degenerate) secants.


\section{Preliminaries}
Let $(X,\lambda)$ be an indecomposable principally polarized abelian variety of dimension $g$, and let $\Theta$ be a symmetric representative of the polarization $\lambda$.
We can take a basis $\set{\theta}$ of $H^0(X,\OO_X(\Theta))$ and a basis $\set{\theta_0,\ldots,\theta_{N}}$ (with $N=2^g-1$) of the $2^g$-dimensional space $H^0(X,\OO_X(2\Theta))$ which satisfies the addition formula: 
	\begin{equation*}
		\theta(z+w)\theta(z-w) = \sum_{j=0}^N \theta_j(z)\theta_j(w),
	\end{equation*}
for all $z,w\in X$. 

Since symmetric representatives $\Theta$ of the polarization $\lambda$ differ by translations by points of order 2,
 we have that the linear system $\left|2\Theta\right|$ is independent of the choice of $\Theta$.
 It defines a morphism $K:X\to\left|2\Theta\right|$ called the \emph{Kummer morphism},
 whose image is the \emph{Kummer variety} $K(X)$ of $X$, and is isomorphic to $X/\pm 1$. 
Note that we may identify $K$ with ${[\theta_0:\cdots:\theta_{N}]}$.

Let $x\in X$ be any point, we write $\Theta_x$ for the divisor $\Theta+x$ and $\theta_x$ for the section $z\mapsto\theta(z-x)$ of $H^0\left(X,\OO_X(\Theta_x)\right)$.

\begin{definition}
	Let $Y\subset X$ be an Artinian scheme of length $m+2$. Define the scheme:
		\begin{equation*}
			V_Y	=	2\set{\zeta\in X\;:\; \exists W\in\mathbb{G}\left(m, 2^g-1\right)\text{ such that } \zeta+Y\subset K^{-1}W}.
		\end{equation*}
	We will say that $X$ satisfies the $(m+2)$-secant condition if such a subscheme exists and $K$ restricted to $\zeta+Y$ is an embedding.
\end{definition}

\section{A curve of $(m+2)$-secants and the minimal class}

Before giving the main result of this section, we prove a little lemma:
\begin{lemma}\label{lemma:tecLemma3components}
	Let $(X,\lambda)$ be an indecomposable principally polarized abelian variety of dimension $g>m$, let $\Theta$ be a symmetric representative of the polarization $\lambda$.
	Suppose that $Y=\left\lbrace a_1,\ldots,a_{m+2}\right\rbrace$ is a reduced subscheme of $X$ such that $V_Y$ contains an irreducible complete curve $\Gamma$ that generates $X$ and assume that
	${\Theta_{a_2}\cap \cdots\cap \Theta_{a_{m+2}}}$ is a complete intersection. If 
		\begin{displaymath}
			\Theta_{a_3}\cap\cdots\cap\Theta_{a_{m+2}}\subset \Theta_{a_1}\cup\Theta_{a_2}\cup Z
		\end{displaymath}
	for a subscheme $Z$ of $X$, then 
		\begin{displaymath}
			\Theta_{a_3}\cap\cdots\cap\Theta_{a_{m+2}}\subset Z.
		\end{displaymath}
\end{lemma}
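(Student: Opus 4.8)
The plan is to exploit that $\Theta_{a_2}\cap\cdots\cap\Theta_{a_{m+2}}$ is a complete intersection, hence a $1$-dimensional scheme (since $g>m$, there are $m+1$ divisors, so the intersection has dimension $g-(m+1)\geq 0$; one should check the relevant dimension count — in fact for the argument to bite we want $\Theta_{a_3}\cap\cdots\cap\Theta_{a_{m+2}}$, an intersection of $m$ divisors, to be pure of dimension $g-m\geq 1$). Write $C=\Theta_{a_3}\cap\cdots\cap\Theta_{a_{m+2}}$; by the complete-intersection hypothesis applied to the larger intersection, $C$ is Cohen--Macaulay and in particular has no embedded components, and $\Theta_{a_1}\cap C$ and $\Theta_{a_2}\cap C$ are proper closed subschemes of each irreducible component of $C$ of the expected smaller dimension. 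The hypothesis says $C\subset\Theta_{a_1}\cup\Theta_{a_2}\cup Z$, and the goal is $C\subset Z$.

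**The decomposition argument.** First I would reduce to components: let $C'$ be any irreducible component of $C$. Since $C$ is purely $(g-m)$-dimensional with no embedded points (complete intersection), it suffices to show $C'\subset Z$ as sets and then match multiplicities, or more cleanly, to show the generic point of $C'$ lies on $Z$. Because $\Theta_{a_1}$ and $\Theta_{a_2}$ are ample divisors, $\Theta_{a_i}\cap C'$ is a proper closed subset of $C'$ for $i=1,2$ unless $C'\subset\Theta_{a_i}$. So the only way the containment $C'\subset\Theta_{a_1}\cup\Theta_{a_2}\cup Z$ can fail to force $C'\subset Z$ is if $C'\subset\Theta_{a_1}$ or $C'\subset\Theta_{a_2}$. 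Thus the crux is to rule out an irreducible component of $\Theta_{a_3}\cap\cdots\cap\Theta_{a_{m+2}}$ being entirely contained in $\Theta_{a_1}$ or in $\Theta_{a_2}$.

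**Ruling out $C'\subset\Theta_{a_1}$ or $C'\subset\Theta_{a_2}$.** This is where the secant hypothesis on $\Gamma\subset V_Y$ and the complete-intersection hypothesis on $\Theta_{a_2}\cap\cdots\cap\Theta_{a_{m+2}}$ enter. If some component $C'$ of $\Theta_{a_3}\cap\cdots\cap\Theta_{a_{m+2}}$ were contained in $\Theta_{a_2}$, then $C'$ would be a component of $\Theta_{a_2}\cap\Theta_{a_3}\cap\cdots\cap\Theta_{a_{m+2}}$ of dimension $g-m$, which is too big: a complete intersection of these $m+1$ ample divisors has pure dimension $g-m-1$. That contradiction disposes of the $\Theta_{a_2}$ case immediately. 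For the $\Theta_{a_1}$ case, the divisor $\Theta_{a_1}$ is not among those cutting out the complete intersection $\Theta_{a_2}\cap\cdots\cap\Theta_{a_{m+2}}$, so I cannot argue purely by dimension; instead I would use the secant condition. The points $a_1,\dots,a_{m+2}$ are collinear-after-translation on the Kummer variety along the whole curve $\Gamma$, which translates (via the addition formula / the standard identification of $|2\Theta|$-sections with products $\theta_x\theta_{-x-\text{shift}}$) into a linear relation among the theta functions $\theta_{\zeta+a_i}$ for $\zeta\in\Gamma$; symmetry in the indices $a_1,\dots,a_{m+2}$ then shows $\Theta_{a_1}$ plays the same role as $\Theta_{a_2}$, so $\Theta_{a_1}\cap\Theta_{a_3}\cap\cdots\cap\Theta_{a_{m+2}}$ is also forced to have the expected dimension $g-m-1$, again contradicting $C'\subset\Theta_{a_1}$.

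**Main obstacle.** The delicate point is the last one: making precise why $\Theta_{a_1}$ behaves like the other $\Theta_{a_i}$ despite not being listed in the complete-intersection hypothesis. I expect this to follow from the defining symmetry of $V_Y$ in the $m+2$ points of $Y$ together with the fact that $\Gamma$ generates $X$ (so the relation among the $\theta_{\zeta+a_i}$ cannot degenerate), but writing this cleanly — possibly by an auxiliary argument showing that, for $\Gamma$ generating $X$, $\Theta_{a_1}\cap\Theta_{a_3}\cap\cdots\cap\Theta_{a_{m+2}}$ cannot acquire an extra $(g-m)$-dimensional component without violating irreducibility or generation of $\Gamma$ — is the part that needs care. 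Once every component $C'$ of $C$ is shown to lie in $Z$ and the complete-intersection (hence reduced-in-codimension-one, no embedded points) structure of $C$ is invoked, the scheme-theoretic containment $C\subset Z$ follows.
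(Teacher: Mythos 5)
There is a genuine gap, and it sits exactly where you flagged it. Your reduction to irreducible components and your dimension count ruling out $W\subset\Theta_{a_2}$ (a $(g-m)$-dimensional component inside the complete intersection $\Theta_{a_2}\cap\cdots\cap\Theta_{a_{m+2}}$, which must be pure of dimension $g-m-1$) both match the paper. But the case $W\subset\Theta_{a_1}$ cannot be handled by the ``symmetry'' you propose: the hypotheses of the lemma are genuinely asymmetric in $a_1$ versus $a_2$ --- only $\Theta_{a_2}\cap\cdots\cap\Theta_{a_{m+2}}$ is assumed to be a complete intersection, and nothing forces $\Theta_{a_1}\cap\Theta_{a_3}\cap\cdots\cap\Theta_{a_{m+2}}$ to have the expected dimension. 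You acknowledge you cannot write this step down, and indeed the route you sketch is not the one that works.

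What the paper does instead is use the secant relation \emph{dynamically} in $\zeta$. For each $\zeta$ with $2\zeta\in\Gamma$ the relation $\sum_{i}\alpha_i\,\theta(z+2\zeta+a_i)\theta(z-a_i)=0$, restricted to $\Theta_{a_1}\cap\Theta_{a_3}\cap\cdots\cap\Theta_{a_{m+2}}$, kills every term except $i=2$, giving
\begin{displaymath}
\Theta_{a_1}\cap\Theta_{a_3}\cap\cdots\cap\Theta_{a_{m+2}}\subset \Theta_{a_2}\cup\Theta_{-2\zeta-a_2}.
\end{displaymath}
The option $W\subset\Theta_{a_2}$ is already excluded by the complete-intersection hypothesis, so the irreducible component $W$ must satisfy $W\subset\Theta_{-2\zeta-a_2}$ for \emph{every} such $\zeta$. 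Since $\Gamma$ generates $X$, the translates $\Theta_{-2\zeta-a_2}$ as $\zeta$ varies have empty common intersection, whence $W=\emptyset$. This ``trap $W$ in a moving translate of $\Theta$'' argument is the missing idea; without it your proof does not close.
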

\begin{proof}
	Let $W\subset \Theta_{a_3}\cap\cdots\cap\Theta_{a_{m+2}}$ be an irreducible component and suppose that $W$ is contained in either $\Theta_{a_1}$ or $\Theta_{a_2}$.
If $W$ is contained in $\Theta_{a_2}$ we contradict that ${ \Theta_{a_2}\cap\cdots\cap\Theta_{a_{m+2}} }$ is a complete intersection. 
On the other hand, if $W$ is contained in $\Theta_{a_1}$, this implies that $\zeta+W$ is contained in ${\Theta_{a_1+\zeta}\cap \Theta_{a_3+\zeta}\cap\cdots \Theta_{a_{m+2}+\zeta}}$, but
	\[{ \Theta_{a_1+\zeta}\cap \Theta_{a_3+\zeta}\cap\cdots \Theta_{a_{m+2}+\zeta}\subset\Theta_{a_2+\zeta}\cup \Theta_{-a_2-\zeta}, }\]

	so that $\zeta+W\subset \Theta_{-a_2-\zeta}$ (in the other case $W\subset \Theta_{a_2}$). That is, \(W\subset \Theta_{-a_2-2\zeta}\) for any \(\zeta\in \Gamma\), therefore $W=\emptyset$.
	This finishes the proof.
\end{proof}

The purpose of this section is to prove the following result:
\begin{theorem}\label{theorem:MainCurveOfmSecants}
		Let $(X,\lambda)$ be an indecomposable principally polarized abelian variety of dimension $g$, let $\Theta$ be a symmetric representative of the polarization $\lambda$.
		Suppose that $X$ has the \((m+2)\)-secant property, with a minimal \(m\in\mathbb{Z}_{>0}\), \(g<m\).
		Let $Y=\set{a_1,\ldots,a_{m+2}}$ be a reduced subscheme of $X$ and suppose that it satisfies the following conditions:
		\begin{enumerate}
			\item $V_Y$ contains an irreducible complete curve $\Gamma$ that generates $X$.
			\item The scheme $\Theta_{a_2}\cap\cdots\cap\Theta_{a_{m+2}}$ is a complete intersection. 
			\item The points $\left(a_1-a_j\right)_{2\leq j\leq m+2}$ are $\Z$-linearly independent.
		\end{enumerate}

		Then, $\Gamma$ is $m$ times the minimal cohomology class of $\Theta$.
\end{theorem}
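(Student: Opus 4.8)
The plan is to convert the secant hypothesis into a family of theta identities, use the complete‑intersection hypothesis to strip them down to two‑term relations, deduce a rigidity statement for the translated theta divisors involved, and only then read off the class of $\Gamma$.

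\emph{From secants to theta identities.} For generic $\zeta\in\Gamma$ the $m+2$ points $K(\zeta+a_i)$ span the $m$‑plane $W_\zeta$ exactly: if they spanned an $(m-1)$‑plane we would obtain an $\bigl((m-1)+2\bigr)$‑secant for the length‑$(m+1)$ subscheme $Y\setminus\{a_i\}$, contradicting the minimality of $m$. Hence their lifts to $\mathbb{C}^{2^g}$ satisfy a unique (up to scalar) relation $\sum_{i=1}^{m+2}c_i(\zeta)K(\zeta+a_i)=0$ with the $c_i$ algebraic on $\Gamma$, and the same minimality argument (drop the identically vanishing coordinate) shows each $c_i\not\equiv 0$, so $c_i(\zeta)\neq 0$ for generic $\zeta$. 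Pairing the relation with $(\theta_0(w),\dots,\theta_N(w))$ and invoking the addition formula gives, for all $w$,
\[
\sum_{i=1}^{m+2}c_i(\zeta)\,\theta(\zeta+a_i+w)\,\theta(\zeta+a_i-w)=0 .
\]
Restricting $w$ to $\bigcap_{i\neq j}\Theta_{\zeta+a_i}$ kills every term but the $j$‑th, so for generic $\zeta$ and each $j$,
\[
\bigcap_{i\neq j}\Theta_{\zeta+a_i}\ \subseteq\ \Theta_{\zeta+a_j}\cup\Theta_{-\zeta-a_j},
\qquad\text{equivalently}\qquad
\bigcap_{i\neq j}\Theta_{a_i}\ \subseteq\ \Theta_{a_j}\cup\Theta_{-a_j-2\zeta}.
\]

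\emph{Rigidity.} Restricting the displayed identity to $T_\zeta$, where $T:=\Theta_{a_3}\cap\cdots\cap\Theta_{a_{m+2}}$, and keeping only $i=1,2$, I would first derive the equality of effective divisors $(\Theta_{a_1}+\Theta_{-a_1-2\zeta})\big|_{T}=(\Theta_{a_2}+\Theta_{-a_2-2\zeta})\big|_{T}$, and then separate fixed and moving parts to conclude $\Theta_{a_1}\cap T=\Theta_{a_2}\cap T$ and $\Theta_{-a_1-2\zeta}\cap T=\Theta_{-a_2-2\zeta}\cap T$ for all $\zeta\in\Gamma$. Lemma~\ref{lemma:tecLemma3components} is exactly what discards components of the moving divisors lying on $\Theta_{a_1}\cup\Theta_{a_2}$; the completeness of $\Theta_{a_2}\cap\cdots\cap\Theta_{a_{m+2}}$ supplies the correct dimensions; and the $\mathbb{Z}$‑linear independence of $(a_1-a_j)_j$ ensures that $T$ meets the translates $\Theta_{-a_j-2\zeta}$ properly — in particular that no translate of $2\Gamma$ sits inside a translate of $\Theta$, which would otherwise produce persistent spurious common components. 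Running this over all index pairs pins down the subvarieties $\bigcap_{i\neq j}\Theta_{a_i}$ and the family $\{\Theta_{-a_j-2\zeta}\cap T\}_{\zeta\in\Gamma}$, and shows this family sweeps out $T$ as $\zeta$ ranges over $\Gamma$ (since $2\Gamma$ generates $X$).

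\emph{The class, and the main obstacle.} I would package the last statement into an incidence correspondence $\Sigma=\{(\zeta,w)\in\Gamma\times T:\theta(w+2\zeta+a_1)=0\}$, which by the rigidity step is reduced and insensitive to replacing $a_1$ by $a_2$. Writing $[\Sigma]=f^*[\Theta]$ for $f(\zeta,w)=w+2\zeta+a_1$ and using the see‑saw decomposition of $\mu^*[\Theta]$ together with $[2]^*[\Theta]=4[\Theta]$, one reads off $\Gamma\cdot\Theta$ and the Künneth cross‑term, which records the restriction $H^1(X)\to H^1(\Gamma)$ (injective because $\Gamma$ generates $X$); matching this against the fibre structure of $\Sigma$ over $\Gamma$ (complete intersections of $m+1$ theta divisors) and over $T$ (finite fibres of degree $4\,\Gamma\cdot\Theta$), and doing likewise for the symmetric configurations obtained by permuting $a_3,\dots,a_{m+2}$, forces $[\Gamma]$ to be a rational multiple of $[\Theta]^{g-1}/(g-1)!$ with multiple $m$ — equivalently, one checks directly that $(X,\Theta,\Gamma)$ satisfies the standard criterion for a curve to represent $m$ times the minimal class (cf.~\cite{VersDebarre1992}). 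The delicate point throughout is the rigidity step: extracting honest equalities of effective divisors from the theta identities, i.e.\ excluding that for \emph{every} $\zeta\in\Gamma$ the moving divisors $\Theta_{-a_j-2\zeta}\cap T$ share components with $\Theta_{a_1}$, $\Theta_{a_2}$, or with one another. This is precisely where completeness of $\Theta_{a_2}\cap\cdots\cap\Theta_{a_{m+2}}$, minimality of $m$, and the $\mathbb{Z}$‑linear independence of $(a_1-a_j)_j$ are all used; once it is in place, the computation of $[\Gamma]$ is essentially formal.
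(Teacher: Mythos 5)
Your opening step coincides with the paper's: pair the linear relation among the $K(\zeta+a_i)$ with the addition formula, restrict to $T=\Theta_{a_3}\cap\cdots\cap\Theta_{a_{m+2}}$ to get a two-term identity in $\theta_{-2\zeta-a_1}\theta_{a_1}$ and $\theta_{-2\zeta-a_2}\theta_{a_2}$, and use Lemma~\ref{lemma:tecLemma3components} together with the complete-intersection hypothesis to discard spurious components. After that, however, your argument has a genuine gap. The conclusion ``$[\Gamma]=m\,[\Theta]^{g-1}/(g-1)!$'' is a statement about the full class in $H^{2g-2}(X,\mathbb{Z})$, and the data you propose to extract from the incidence correspondence $\Sigma$ --- the intersection number $\Gamma\cdot\Theta$ and the restriction $H^1(X)\to H^1(\Gamma)$ --- do not determine that class: a curve with $\Gamma\cdot\Theta=mg$ generating $X$ need not represent a multiple of the minimal class. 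The assertion that the Künneth bookkeeping ``forces $[\Gamma]$ to be a rational multiple of $[\Theta]^{g-1}/(g-1)!$'' is exactly the point at issue and is not justified; indeed the whole reason Debarre needed $\mathrm{End}(X)=\mathbb{Z}$ was to force the endomorphism $\alpha_\Gamma$ associated to $\Gamma$ and $\Theta$ to be an integer, which is equivalent to $[\Gamma]$ being an integer multiple of the minimal class.

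What the paper actually does after the rigidity step, and what is missing from your proposal, is the construction of a \emph{second} one-parameter family of $(m+2)$-secants: writing the two-term identity at two parameters $\zeta,\zeta'\in\tfrac12\Gamma$, cross-multiplying and cancelling $\theta_{a_1}\theta_{a_2}$, Lemma~\ref{lemma:tecLemma3components} shows $T\subset\{A=0\}$, which unwinds to say that the points $K(\zeta+b_i)$ with $2b_1=2\zeta'+2a_3+a_1+a_2$, $b_j=b_1-a_3+a_{j+2}$, $b_{m+1}=a_2+a_3-b_1$, $b_{m+2}=a_1+a_3-b_1$ again lie on an $m$-plane. Feeding this $\zeta'$-family into Debarre's Lemme~3.7 (sections $\lambda_i$ on the normalization, divisors $E_i$ with $\alpha_\Gamma(b_i-b_j)=\mathcal{S}(E_i-E_j)$) yields $\alpha_\Gamma(2\zeta')=M\cdot 2\zeta'+\text{const}$ for $\zeta'$ on a curve generating $X$, hence $\alpha_\Gamma=M\cdot\mathrm{id}$ with $M\in\mathbb{Z}$ --- this is the step that removes the hypothesis on $\mathrm{End}(X)$. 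Finally, the $\mathbb{Z}$-linear independence of $(a_1-a_j)_j$ is used not to guarantee proper intersections, as you suggest, but in the coefficient-matching computation of $\alpha_\Gamma(a_i-a_j)$ that rules out $M=0$ and pins down $M=m$. Without the $b_i(\zeta')$-family and the resulting linearity of $\alpha_\Gamma$ on a generating curve, your argument cannot reach the conclusion.
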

\begin{proof}
	Since we have that the points $K\left(\zeta+a_1\right),\ldots,K\left(\zeta+a_{m+2}\right)$ lie on an $m$-plane, we have that there exist constants $\alpha_j$ such that
	\[\sum_{i=1}^{m+2} \alpha_i \theta_j(\zeta+a_i)=0\]
Multiplying by $\theta_j(z+\zeta)$ and taking the sum over $j$ we get
	\[\sum_{i=1}^{m+2} \alpha_i \theta(z+2\zeta+a_i)\theta(z-a_i)=0.\]
In particular, restricting to $\Theta_{a_3}\cap\cdots\cap\Theta_{a_{m+2}}$ we obtain
\begin{align*}
	\alpha_1 \theta_{-2\zeta-a_1}\theta_{a_1}+\alpha_2\theta_{-2\zeta-a_2}\theta_{a_2} & = 0,
	\intertext{and since this is true for all $\zeta\in\frac12 \Gamma$, it is true for $\zeta'\in\frac12 \Gamma$:}
	\alpha_1' \theta_{-2\zeta'-a_1}\theta_{a_1}+\alpha_2'\theta_{-2\zeta'-a_2}\theta_{a_2} & = 0.
\end{align*}
Multiplying the first equation by $\alpha_2'\theta_{-2\zeta'-a_2}\theta_{a_2}$ and the second one by $\alpha_2\theta_{-2\zeta-a_2}\theta_{a_2}$ and substracting we get
	$$\alpha_1\alpha_2' \theta_{-2\zeta-a_1}\theta_{-2\zeta'-a_2}\theta_{a_1}\theta_{a_2}-\alpha_1'\alpha_2 \theta_{-2\zeta'-a_1}\theta_{-2\zeta-a_2}\theta_{a_1}\theta_{a_2}=0.$$
Factor by $\theta_{a_1}\theta_{a_2}$, denote the remaining term as $A$. We get that
	$$\Theta_{a_3}\cap\cdots\cap\Theta_{a_{m+2}}\subset \set{A=0}\cup\Theta_{a_1}\cup\Theta_{a_2}.$$
By Lemma \ref{lemma:tecLemma3components} we get that
	$$\Theta_{a_3}\cap\cdots\cap\Theta_{a_{m+2}}\subset \set{A=0},$$
and so, 
	$$\Theta_{a_3}\cap\cdots\cap\Theta_{a_{m+2}}\cap\Theta_{-2\zeta-a_2}\subset \Theta_{-2\zeta-a_1}\cup\Theta_{-2\zeta'-a_2}.$$
Equivalently, this means that the points
	$$K(\zeta+b_1),\cdots,K(\zeta+b_{m+2})$$
lie on an $m$-plane, where
	\begin{align*}
		2b_1	& = 2\zeta'+2a_3+a_1+a_2 	\\
		b_j		& = b_1-a_3+a_{j+2}			\\
		b_{m+1}		& = a_2+a_{3}-b_1		 \\
		b_{m+2}		& = a_1+a_{3}-b_1
	\end{align*}
	for $2\leq j\leq m$.

In particular, $X$ admits a two dimensional family of $(m+2)$-secants, since now the $b_i$ depend on a one dimensional parameter $\zeta'$.

Let \(\rho\) be the endomorphism of \(X\) associated to \(\Gamma\) and \(\Theta\). We will prove under the above hypotheses that $\rho=m\cdot\mbox{Id}_X$, and so $\Gamma$ is $m$ times the minimal class.
In what comes next we will follow \cite[Section 4]{VersDebarre1992}.
First we will show that \(\rho=M\cdot\mbox{Id}_X\) for some \(M\in\Z\) and then we compute that \(M=m\).
%
%
%
\\
Let $N$ be the normalization of $\Gamma$, $\tilde{\Gamma}\subset 2^{-1}\left(\Gamma\right)$ an irreducible complete curve and $\tilde{N}$ its normalization.
We have the following commutative diagram
\begin{center}
	\begin{tikzcd}
		\tilde{N} \arrow[r, "\psi"]\arrow[d, "\tilde{\pi}"'] & N \arrow[r]\arrow[d,"\pi"'] & J(N)\arrow[d,"\gamma"] \\
		\tilde{\Gamma}\arrow[r, "2"'] & \Gamma\arrow[r] & X
	\end{tikzcd}
\end{center}
where $\gamma$ is the homomorphism induced by $\pi$. We observe that $\psi$ is a Galois morphism with Galois group isomorphic to those $2$-torsion points of $X$ that leave $\tilde{\Gamma}$ invariant.

By \cite[Lemma 3.7]{VersDebarre1992} there exist sections
\begin{equation*}
	\lambda_j\in H^0\left(\tilde{N},\tilde{\pi}^* \OO_X\left(2\Theta_{-b_1}+\cdots+2\Theta_{-b_{m+2}}-2\Theta_{-b_j}\right)\right)
\end{equation*}
such that for all \(j=1,\ldots,2^g\), \(\sum_{i=1}^{m+2} \lambda_i\pi^* t^*_{b_i}\theta_j=0\) on \(\tilde{N}\). Now each section \(\lambda_i\) is stable by the Galois group of $\psi$ and so $\div(\lambda_i)=\psi^* E_i$ for some divisor $E_i$ on $N$. 
Note that by the Theorem of the Square, \(\OO_\Gamma\left(\Theta_{b_i-b_j}\right)\simeq \OO_\Gamma\left(E_i-E_j\right)\). Hence, if \(\mathcal{S}:\Div(N)\to X\) is the pushforward by $\pi$
composed with the addition map of \(0\)-cycles, we have that
	\[\rho_\Gamma\left(b_i-b_j\right)=\mathcal{S}\left(E_i-E_j\right).\]
Moreover, $\lambda_i$ vanishes at a point $\zeta$ only if the $m+1$ points $K(b_p)$ for $p\neq i$ belong to a \((m-1)\)-plane,
unless they coincide, \emph{i.e.} $\zeta+b_p=-\zeta-b_q$ for $j\not\in\set{p,q}$, that is, the image in $X$ is $2\zeta=-b_p-b_q$.
Hence

%
%

\[\mathcal{S}(E_i-E_j)=\sum_{\substack{p,q\neq i \\ p<q }} \delta_{pq}\left(\zeta'\right)(-b_p-b_q)-\sum_{\substack{p,q\neq j \\ p<q }} \delta_{pq}\left(\zeta'\right)(-b_p-b_q)\]
for certain integers $\delta_{pq}\left(\zeta'\right)\geq 0$. Since the points $b_p-b_q$ for $1\leq p<q\leq m$ and $-b_{m+1}-b_{m+2}$ depend on $\zeta'$, we can choose $\zeta'$ so that
$b_p-b_q$ and $-b_{m+1}-b_{m+2}$ are either not on $\Gamma$ or are smooth points of $\Gamma$. This way, we get that $\delta_{pq}\in\set{0,1}$.

Note that $-b_p-b_q$ is constant if $p<m+1$ and $q\geq m+1$. Therefore,
\begin{equation*}
	\rho\left(2\zeta'\right)=2M\zeta' +\text{constant}
\end{equation*}
where
$M=\left(\sum_{\substack{1\leq p<q\leq m \\ p\text{ or }q \text{ equals }j}}\delta_{pq}+\sum_{\substack{1\leq p<q\leq m \\ p\text{ or }q \text{ equals }i}}\delta_{pq}\right)$ that is constant.

In particular, for a general $z\in\Gamma$ we have that $\rho(z)=Mz$.
Since \(\Gamma\) generates \(X\) we get \(\rho=M\cdot\mbox{Id}_X\) with \(M\in\mathbb{Z}\). 
Now, we show that $M=m$ using an argument due to Debarre \cite{VersDebarre1992}.

Note that the fact that \(\OO_\Gamma\left(\Theta_{b_i-b_j}\right)\simeq \OO_\Gamma\left(E_i-E_j\right)\) is of degree \(0\) immediately implies that the cardinality, namely \(N\), of the following set is independent of \(j\):
	\begin{equation*}
		\set{p\in\left\lbrace 1,\ldots,m+2\right\rbrace\;:\; p\neq j \text{ and }(-a_j-a_p)\in \Gamma}.
	\end{equation*}

Analogous to the previous computations, since $K\left(\zeta+a_1\right),\ldots,K\left(\zeta+a_{m+2}\right)$ lie on an $m$-plane, we have that
\begin{align*}
	\rho(a_i-a_j)	& = \sum_{\substack{p,q\neq i\\ p<q\\ -a_p-a_q\in\Gamma}} (-a_p-a_q) - \sum_{\substack{p,q\neq j\\ p<q\\ -a_p-a_q\in\Gamma}} (-a_p-a_q) \\
					& = -\sum_{\substack{p\neq i,j \\ -a_j-a_p\in\Gamma}} (a_j+a_p) + \sum_{\substack{q\neq i,j \\ -a_i-a_p\in\Gamma}} (a_i+a_q).
\end{align*}
Since \(\rho=M\cdot\mbox{Id}_X\), with \(M\in\Z\), then
\begin{equation*}
	M(a_i-a_j) = 	\begin{cases}
						N(a_i-a_j) + \sum_{\substack{ p\neq i,j \\ -a_i-a_p}} a_p-\sum_{\substack{ q\neq i,j \\ -a_j-a_q}} a_q & \text{if } -a_i-a_j\not\in\Gamma \\
						(N-1)(a_i-a_j) + \sum_{\substack{ p\neq i,j \\ -a_i-a_p}} a_p-\sum_{\substack{ q\neq i,j \\ -a_j-a_q}} a_q & \text{if } -a_i-a_j\in\Gamma.
					\end{cases}
\end{equation*}
Since \((a_1-a_j)_{2\leq j\leq m+2}\) are $\Z$-linearly independent, this leads to two cases:
\begin{itemize}
	\item Let $M=N$ and no $(a_i-a_j)$ lie on $\Gamma$. This implies that $N$ is null, and so is $\rho$ which is absurd.
	\item Let $M=N-1$ and $(a_i-a_j)\in\Gamma$ for all $i,j$. Then, $N=m+1$ and $\rho=m\cdot\mbox{Id}_X$. 
\end{itemize}
Therefore, $\Gamma$ is $m$ times the minimal class.
\end{proof}

\begin{remark}
	\begin{itemize}
		\item[]
		\item This theorem improves \cite[Theorem 4.1]{VersDebarre1992}, since we remove the condition over the endomorphism ring to be $\mathbb{Z}$.
		\item The condition of $\Theta_{a_2}\cap\cdots\cap\Theta_{a_{m+2}}$ is not easy to remove, note that in the case of $m=2$;
		as in \cite[p.617]{BeauvilleDebarreSurLeproblemedeSchottkyPrym}; we could obtain an abelian variety which satisfies the condition of quadrisecant planes but it is not necessary a Prym variety, so it is not clear if one gets a curve that is twice the minimal class.
	\end{itemize}
\end{remark}



\section{A curve of quadrisecant planes}
It is worth looking at the case $m=2$ since we can remove an additional hypothesis, that is,
we no longer need for $(a_1-a_j)_j$ to be $\mathbb{Z}$-linearly independent.

\begin{theorem}\label{theorem:curve4Secant}
    Let $(X,\lambda)$ be an indecomposable principally polarized abelian variety of dimension $g>m$,
    let $\Theta$ be a symmetric representative of the polarization $\lambda$.
	Suppose that $X$ does not possess the trisecant property.
    Let $Y=\set{a_1,a_2,a_3,a_{4}}$ be a reduced subscheme of $X$ and suppose that it satisfies the following conditions:
		\begin{enumerate}
			\item \(V_Y\) contains an irreducible complete curve $\Gamma$ that generates $X$.
			\item The scheme $\Theta_{a_2}\cap\Theta_{a_3}\cap\Theta_{a_4}$ is a complete intersection.
		\end{enumerate}
	Then, $\Gamma$ is twice the minimal cohomology class of $\Theta$.
\end{theorem}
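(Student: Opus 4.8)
The plan is to mimic the strategy of Theorem \ref{theorem:MainCurveOfmSecants} with $m=2$, tracking exactly where the hypothesis of $\mathbb{Z}$-linear independence of $(a_1-a_j)_j$ was used and showing that, in the quadrisecant case, the extra structure of the curve $\Gamma$ together with the failure of the trisecant property forces the conclusion anyway. First I would run the computation verbatim through the addition formula: from the fact that $K(\zeta+a_1),\ldots,K(\zeta+a_4)$ span a plane for every $\zeta\in\frac12\Gamma$, derive $\sum_{i=1}^{4}\alpha_i\theta(z+2\zeta+a_i)\theta(z-a_i)=0$, restrict to $\Theta_{a_3}\cap\Theta_{a_4}$, compare the relation for two parameters $\zeta,\zeta'$, and apply Lemma \ref{lemma:tecLemma3components} to conclude that $\Theta_{a_3}\cap\Theta_{a_4}\cap\Theta_{-2\zeta-a_2}\subset\Theta_{-2\zeta-a_1}\cup\Theta_{-2\zeta'-a_2}$. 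This produces a second, two-parameter family of quadrisecant planes through the points $K(\zeta+b_i)$ with the $b_i$ as displayed in Theorem \ref{theorem:MainCurveOfmSecants}; in the $m=2$ case these are $2b_1=2\zeta'+2a_3+a_1+a_2$, $b_2=b_1-a_3+a_4$, $b_3=a_2+a_3-b_1$, $b_4=a_1+a_3-b_1$.

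Next I would introduce the endomorphism $\alpha=\alpha_\Gamma$ attached to $\Gamma$ and $\Theta$ and rerun the normalization/diagram argument of the previous proof: pass to $N$, to an irreducible $\tilde\Gamma\subset 2^{-1}\Gamma$ and its normalization $\tilde N$, invoke \cite[Lemme 3.7]{VersDebarre1992} to get sections $\lambda_i$ with $\sum\lambda_i\pi^*t^*_{b_i}\theta_j=0$, descend them to divisors $E_i$ on $N$, and use the Theorem of the Square to identify $\alpha(b_i-b_j)=\mathcal S(E_i-E_j)$. Exactly as before, the vanishing of $\lambda_i$ is governed either by the $m+1=3$ remaining points falling on a line (excluded, since $X$ lacks the trisecant property — this is precisely where the no-trisecant hypothesis replaces minimality of $m$) or by a collision $2\zeta=-b_p-b_q$. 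Choosing $\zeta'$ generic so that the $\zeta'$-dependent points $b_p-b_q$, $1\le p<q\le 2$, and $-b_3-b_4$ are either off $\Gamma$ or smooth on it, the multiplicities $\delta_{pq}$ lie in $\{0,1\}$, and since $-b_p-b_q$ is constant whenever $p\le 2<q$ we again get $\alpha(2\zeta')=2\zeta' M+\text{const}$ for a constant integer $M$, hence $\alpha=M\in\mathbb Z$ because $\Gamma$ generates $X$.

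The last step — computing $M=2$ — is where the two proofs genuinely diverge and is the main obstacle. In Theorem \ref{theorem:MainCurveOfmSecants} one used $\mathbb{Z}$-linear independence of the $(a_1-a_j)$ to split the identity $N(a_i-a_j)=(M-\epsilon)(a_i-a_j)+\sum_{p}a_p-\sum_q a_q$ into two clean cases. Here I would instead argue directly with $m=2$: running the analogue of that identity for the original configuration $\{a_1,\ldots,a_4\}$, with $N$ the number of indices $p\ne j$ with $-a_j-a_p\in\Gamma$, one gets a relation among the $a_i$ of the form $N(a_i-a_j)=(M-\epsilon_{ij})(a_i-a_j)+(\text{sum of three of the }a_p)$. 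Even without independence, $\Gamma$ being an irreducible curve that generates $X$ of dimension $g>2$ constrains the possible coincidences: if $-a_i-a_j\notin\Gamma$ for all pairs then the relation forces $\alpha$ to contract $\Gamma$, contradicting that $\Gamma$ generates; hence some $-a_i-a_j$ lies on $\Gamma$, and a symmetry/counting argument over the four points (each antidiagonal sum $-a_i-a_j$ either lies on $\Gamma$ for all pairs or the configuration degenerates) pins down $N=M-1$ and, since the three-term sums must then cancel in $X/\langle\text{torsion fixing }\Gamma\rangle$, yields $M=m+1=3$ and therefore $\alpha=2$. I expect the delicate point to be ruling out the mixed possibilities (some antidiagonal sums on $\Gamma$, some not) without linear independence; the tool will be the no-trisecant hypothesis, which forbids exactly the degenerate quadrisecant configurations that would otherwise survive, together with irreducibility of $\Gamma$. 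Once $\alpha=2\cdot\mathrm{id}$, the class of $\Gamma$ is $2$ times the minimal class, completing the proof. $\square$
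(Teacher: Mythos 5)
Your first two steps (the addition-formula computation producing the second family of quadrisecant planes, and the normalization argument yielding $\alpha=M\in\mathbb{Z}$) track the paper exactly; the paper compresses all of this into ``as in the proof of Theorem \ref{theorem:MainCurveOfmSecants}.'' The divergence, and the genuine gap, is in your final step. You propose to compute $M$ by rerunning the counting identity $N(a_i-a_j)=(M-\epsilon_{ij})(a_i-a_j)+\sum_p a_p-\sum_q a_q$ over the original configuration $\{a_1,\dots,a_4\}$. But that identity can only be split into clean cases when the differences $a_1-a_j$ are $\mathbb{Z}$-linearly independent, which is precisely the hypothesis this theorem is designed to drop. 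Your proposed substitute --- that irreducibility of $\Gamma$ together with the no-trisecant hypothesis excludes the mixed coincidence patterns and forces the three-term sums to ``cancel in $X/\langle\text{torsion fixing }\Gamma\rangle$'' --- is not an argument: you flag it yourself as the delicate point, and nothing in the sketch supplies it. There is also an internal inconsistency: you conclude $M=m+1=3$ and yet $\alpha=2$, after having established $\alpha=M$.

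The paper's actual route sidesteps the counting over the $a_i$ entirely. For $m=2$ the endomorphism obtained from the collision multiplicities is $\alpha=\delta_{12}+\delta_{34}$ with $\delta_{12},\delta_{34}\in\{0,1\}$, so $\alpha\in\{0,1,2\}$ a priori and no further computation is required --- one only has to exclude the two small values. The case $\alpha=0$ is absurd as in Theorem \ref{theorem:MainCurveOfmSecants}, and the case $\alpha=1$ is excluded by the Matsusaka--Ran criterion \cite{BirkenhakeLange2004}: a curve of minimal class would force $X$ to be a Jacobian, hence to carry trisecants, contradicting the hypothesis. This is the one place where the no-trisecant assumption does real work in the conclusion, and it is absent from your proposal; without Matsusaka--Ran (or an equivalent mechanism for ruling out $\alpha=1$), your argument does not close.
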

\begin{proof}
    As in the proof of Theorem \ref{theorem:MainCurveOfmSecants}, we get that the endomorphism ${\rho=(\delta_{12}+\delta_{34})\cdot\mbox{Id}_X}$ where $\delta_{12},\delta_{34}\in\set{0,1}$. So we have three cases:
    \begin{itemize}
        \item $\rho=0$. This case is discarded immediately.
        \item $\rho=\mbox{Id}_X$. Then $\Gamma$ is the minimal class. By the Matsusaka-Ran Criterion (see \cite[pp.~341]{BirkenhakeLange2004}) this implies that $X$ is a Jacobian variety, which contradicts our hypothesis.
        \item $\rho=2\cdot\mbox{Id}_X$. Then $\Gamma$ is twice the minimal class. This proves the theorem.
    \end{itemize}
\end{proof}

\begin{remark}
	This is an improvement of Debarre's Theorem 5.1 in \cite{VersDebarre1992} and a variant of Theorem \ref{theorem:MainCurveOfmSecants}. If $a_1-a_2,a_1-a_3,a_1-a_4$ generate $X$, using Debarre's Theorem 5.1, we get that $\Gamma$ generates $X$, and so we require a weaker condition.
\end{remark}

\begin{corollary}\label{corollary:involution4secants}
	With the hypotheses of Theorem \ref{theorem:curve4Secant}, let $s=a_1+a_2+a_3+a_4$ be the sum of the four reduced points.
	Then,  the involution $z\mapsto -z-s$ on $X$ restricts to an involution of $\Gamma$.
\end{corollary}
\begin{proof}
	The previous proof shows that $\delta_{12}=1$ for a generic $\zeta'\in\frac12 \Gamma$. 
	Therefore, this means that for a generic $\zeta'\in\frac12 \Gamma$, $-b_1-b_2\in\Gamma$. However,
		$$-b_1-b_2=-2\zeta'-a_1-a_2-a_3-a_4.$$
\end{proof}

\begin{remark}
	We note that if we take $x\in\frac12 s$, then the involution $z\mapsto -z$ restricts to the curve $C:=\Gamma+x$. Here we have that $C\subset V_{t^* Y}$, so if $\eta\in C$ and we write
	$q_i:=a_i-x_i$, then
	\begin{align*}
		\Pi_\eta 	& =\mbox{span}\set{K\left(\eta +q_1\right),K\left(\eta+q_2\right),K\left(\eta +q_3\right),K\left(\eta +q_4\right)} \\
		\Pi_{-\eta}	& = \mbox{span}\set{K\left(\eta -q_1\right),K\left(\eta -q_2\right),K\left(\eta -q_3\right),K\left(\eta -q_4\right)}
	\end{align*}
	are planes in $\P^{2^g-1}$. This pair, by \cite{MR2656091} is necessary and sufficient for $\Gamma$ to be an Abel-Prym curve.
	Although, they use complicated analytic methods, there is work in progress to give a fully algebro-geometric proof.
\end{remark}

This result generalizes the main theorem in \cite{DebarreTrisecants1},
which was improved several years later in \cite{Arbarello2021}.
We believe that the main result in \cite{Arbarello2021} on the reducedness of a certain scheme \(\Sigma\)
can be used to remove the hypothesis that \(\Theta_{u}\cdot\Theta_{-u}\cdot\Theta_{b_1}\ldots\cdot\Theta_{b_{m-1}}\)
is reduced. Sadly, we has been unable to achieve this so far.

\printbibliography{}
\end{document}